\documentclass{amsart}
\usepackage{amsthm,amssymb,mathtools}
\usepackage{stmaryrd,tikz,pgfplots}
\usepackage{marginnote}

\usepackage[textsize=tiny]{todonotes}

\newcommand{\Marginpar}[1]{\marginpar{\tiny{#1}}}
\newcommand{\Note}[1]{{\par\noindent\hrulefill\par\tiny{#1}\par\noindent\hrulefill\par}}
\newcommand{\Detail}[1]{{#1}}
\renewcommand{\Marginpar}[1]{}
\renewcommand{\Note}[1]{}
\renewcommand{\Detail}[1]{}

\usepackage{hyperref}
\usepackage[all]{xy}
\usepackage{amsmath,amsfonts}	
\usepackage{amsthm,latexsym,amssymb}
\usepackage{tensor}

\newtheorem{thm}{Theorem}[section]
\newtheorem{prop}[thm]{Proposition}

\newtheorem{cor}[thm]{Corollary}

\theoremstyle{definition}

\renewcommand{\[}{\begin{equation*}}
\renewcommand{\]}{\end{equation*}}

\begin{document}
\parskip1mm

\title[]{$4$-dimensional almost-K\"ahler Lie algebras of constant Hermitian holomorphic sectional curvature are K\"ahler}

\author{Mehdi Lejmi}
\address{Department of Mathematics, Bronx Community College of CUNY, Bronx, NY 10453, USA.}
\email{mehdi.lejmi@bcc.cuny.edu}
\author{Luigi Vezzoni}
\address{Dipartimento di Matematica G. Peano \\ Universit\`a di Torino\\
Via Carlo Alberto 10\\
10123 Torino\\ Italy}
\email{luigi.vezzoni@unito.it}

\begin{abstract}
We prove that any $4$-dimensional almost-K\"ahler Lie algebra of constant Hermitian holomorphic sectional curvature
with respect to the canonical Hermitian connection is K\"ahler.
\end{abstract}
\maketitle


\section{Introduction}
An almost-Hermitian manifold is an even dimensional smooth manifold $M$ equipped with an almost complex structure $J$ and a compatible Riemannian metric $g$.
When the induced $2$-form $\omega(\cdot,\cdot)=g(J\cdot,\cdot)$ is closed the quadruple $(M,\omega,J,g)$ is called {\em almost-K\"ahler} and when $J$  is further integrable
it is called {\em K\"ahler}. 

A well-known conjecture of Goldberg~\cite{MR0238238} states that compact almost-K\"ahler Einstein manifolds are neceserly  K\"ahler. 
The conjecture is still open, but some partial results are known. 
Sekigawa~\cite{MR905633} proved that the conjecture is true
if the scalar curvature is non-negative and other results have been proved under various conditions specially in dimension $4$ (see for instance~\cite{MR1604803,MR1880825,MR1607545}).
Moreover, the conjecture is false if the compact is dropped since Nurowski and Przanowski constructed in ~\cite{MR1682582}  a non-compact example of almost-K\"ahler non-K\"ahler Ricci flat manifold which turns out to be of pointwise positive constant holomorphic sectional curvature. Here we recall that an almost-K\"ahler manifold is {\em of pointwise constant holomorphic sectional curvature} 
if, at any point of $M$, $g(R^g_{X,JX}X,JX)$ is constant for any vector field $X$ of unit length with respect to $g$, where $R^g$ denotes the Riemannian curvature of the Levi-Civita connection of $g$. Also, Sato~\cite{MR1422638,MR1700598,MR1806472} studied $4$-dimensional almost-K\"ahler manifolds of pointwise constant holomorphic sectional curvature. In particular, he 
constructed a non-compact example of pointwise negative constant holomorphic sectional curvature~\cite{MR1806472}.
Furthermore, almost-K\"ahler manifolds of pointwise constant  totally-real sectional curvature were examined~\cite{MR1632688,MR1638199,MR1782093}.
In dimension $4$, it turns out that such manifolds are self-dual and are conjectured to be K\"ahler in the compact case~\cite{MR1782093}.

On an almost-K\"ahler non-K\"ahler manifold $(M,\omega,J,g)$ the Levi-Civita connection does not preserve the almost-complex structure and its role is usually replaced 
by the canonical Hermitian connection~\cite{MR0066733,MR1456265} which is the defined as the unique connection $\nabla$ preserving $J$ and $g$ and having $J$-anti-invariant torsion. Hence it is natural to study the condition on a $4$-dimensional almost-K\"ahler manifold of having  pointwise constant holomorphic sectional  with respect to the canonical Hermitian connection instead of the Levi-Civita connection.  We refer to these manifolds as {\em of pointwise constant Hermitian holomorphic sectional curvature}.

The study of such manifolds was initiated
in~\cite{Lejmi-Markus} where it is proved that $4$-dimensional almost-K\"ahler manifolds of pointwise constant Hermitian holomorphic sectional curvature 
are self-dual and that in the compact case compact when the sectional curvature is globally non-negative constant the manifold is K\"ahler. 

In analogy to the Riemannian case, it is rather natural to pursuit the study of these manifolds in the non-compact case. In this note we study left-invariant almost-K\"ahler structures of having pointwise constant Hermitian holomorphic sectional curvature on Lie groups.  

\begin{thm}\label{main}
Any left-invariant almost-K\"ahler structure of pointwise constant
Hermitian holomorphic sectional curvature on a $4$-dimensional Lie group is K\"ahler.
\end{thm}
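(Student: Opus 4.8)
The plan is to reduce the statement to a Lie-algebraic identity and then to a finite computation. Since the almost-K\"ahler structure is left-invariant, left translations are simultaneously isometries of $g$, automorphisms of $J$ and affine transformations for the canonical Hermitian connection $\nabla$; hence the Hermitian holomorphic sectional curvature at an arbitrary point, regarded as a function on the unit sphere of the tangent space, agrees with the one at the identity. Thus ``pointwise constant'' forces it to be a single real number $c$, and the theorem becomes: if $(\mathfrak g,\omega,J,g)$ is a $4$-dimensional almost-K\"ahler Lie algebra with $g(R^\nabla_{X,JX}X,JX)=c\,g(X,X)^2$ for all $X\in\mathfrak g$, then the Nijenhuis tensor $N$ of $J$ vanishes; once that is known $J$ is integrable and $\omega$ is a closed positive $(1,1)$-form, so the structure is K\"ahler. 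By the result of \cite{Lejmi-Markus} we may moreover assume the structure is self-dual, which I would use to shorten the curvature bookkeeping.

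First I would choose a unitary coframe $e^1,e^2,e^3,e^4$ of $\mathfrak g^*$ with $\omega=e^1\wedge e^2+e^3\wedge e^4$ and $J$ in standard form, and pass to the $(1,0)$-forms $\phi=e^1+ie^2$, $\psi=e^3+ie^4$. The bracket is encoded by $d\phi$ and $d\psi$; decomposing by type, the components $(d\phi)^{0,2}=f\,\bar\phi\wedge\bar\psi$ and $(d\psi)^{0,2}=h\,\bar\phi\wedge\bar\psi$ record precisely the Nijenhuis tensor, and a direct check shows that $d\omega=0$ constrains only the $(2,0)$- and $(1,1)$-parts of $d\phi,d\psi$, leaving $(f,h)\in\mathbb C^2$ free. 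The key preliminary observation is that under the residual $U(2)$ of unitary coframe changes the pair $(f,h)$ transforms by the standard representation of $U(2)$ on $\mathbb C^2$; since the nonzero orbits are $3$-spheres, after one such change we may assume $(f,h)=(\nu,0)$ with $\nu\ge 0$. The whole obstruction to being K\"ahler is then the single number $\nu$, and the goal becomes to prove $\nu=0$.

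Next I would write out the surviving structure constants, impose the Jacobi identity (equivalently $d^2=0$) together with $d\omega=0$, compute the connection forms of $\nabla$ from its defining properties ($\nabla g=\nabla J=0$ and torsion $J$-anti-invariant, equal to $N$ up to the usual factor), and then its curvature $R^\nabla\in\Lambda^2\mathfrak g^*\otimes\mathfrak u(2)$. Writing $H$ for the function $X\mapsto g(R^\nabla_{X,JX}X,JX)$ on the $2$-sphere of $J$-lines and imposing $H\equiv c$, I would split this identity into its average over the sphere — which expresses $c$ as a multiple of the Chern scalar curvature — and its trace-free part, whose vanishing yields a polynomial system in $\nu$ and the remaining structure constants. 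Organizing the residual cases by the classification of $4$-dimensional symplectic Lie algebras (treating the unimodular and non-unimodular cases separately), or more efficiently by using self-duality to reduce $R^\nabla$ to its constant-curvature model plus the explicit Nijenhuis contribution, one sees that the system forces $\nu=0$ in every case, which finishes the argument.

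I expect the main difficulty to be that constant holomorphic sectional curvature pins down only the totally symmetric part of $R^\nabla$ detected by the quadruple $(X,JX,X,JX)$; outside the K\"ahler setting $R^\nabla$ has neither the pair symmetry nor the $(1,1)$-type, so the classical complex space form rigidity is unavailable. Bridging this gap is where the real work lies: one has to feed the holomorphic-sectional-curvature equations into the first Bianchi identity with torsion for $\nabla$ — the torsion being $N$ — in order to recover enough of the full curvature to trap $\nu$, and the delicate point is keeping the $U(2)$-module decompositions (or, in the brute-force version, the signs and the long polynomial manipulations) under control.
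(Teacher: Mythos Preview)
Your plan is a reasonable strategy, but it is not the one the paper follows, and the step you flag as ``the main difficulty'' is precisely where all the content lies and where your write-up stops. You set up a unitary coframe, normalize the Nijenhuis data to a single parameter $\nu$, and then assert that imposing constant Hermitian holomorphic sectional curvature together with Jacobi, $d\omega=0$, self-duality and the first Bianchi identity with torsion ``forces $\nu=0$ in every case.'' That sentence is the theorem, not its proof; nothing in your outline shows why the resulting polynomial system has no solution with $\nu>0$. The paper in fact exhibits a one-parameter family of conformally flat almost-K\"ahler non-K\"ahler structures on $\mathfrak r_2'$ (so $\nu>0$ is perfectly consistent with $d\omega=0$, Jacobi and $W=0$), and only an explicit evaluation of $H$ at four different unit vectors rules them out. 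Your sketch gives no mechanism that would detect this.

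The paper's route is quite different and avoids the general polynomial system altogether. After invoking self-duality from \cite{Lejmi-Markus}, it splits into two regimes. If $W^-=0$ but $W^+\neq 0$, the De~Smedt--Salamon classification \cite{MR1955629} pins down the Lie algebra and the metric up to one parameter, and a short direct check shows the only compatible closed $\omega$ yields an integrable $J$. If $W=0$, the paper uses the classification of $4$-dimensional conformally flat Lie algebras \cite{MR3442771} together with Ovando's list of symplectic Lie algebras \cite{MR2307912} to reduce to three algebras: on the abelian one and on $\mathfrak{rr}_{3,0}$ the curvature operator has zero spectrum, so $s^g=0$ and the identity $\|N_J\|^2+\tfrac{s^g}{6}=0$ coming from $W^+=0$ kills $N_J$; on $\mathfrak r_2'$ the paper parametrizes all conformally flat almost-K\"ahler structures explicitly and computes $H(f_1)\neq H(f_2)$. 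What this buys is that the ``polynomial system'' you anticipate never has more than a couple of free parameters at a time. Your direct-coframe approach could in principle be pushed through, but as written it is a plan, not a proof, and the paper's classification-first strategy is both shorter and cleaner.
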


Since we assume the  almost-K\"ahler structure {\em invariant}, we can work on Lie algebras in an algebraic fashion.  In view of \cite{Lejmi-Markus} we can restrict our attention to $4$-dimensional Lie algebras with a self-dual metric. $4$-dimensional Lie algebras with a non-conformally flat self-dual metric are completely described in \cite{MR1955629} and from the classification it turns out that all the compatible almost-K\"ahler structures are K\"ahler. For the conformally flat case we use a classification in \cite{MR3442771} and we give a description of all the conformally flat metrics in terms of the existence of a special coframe. From our description  and the classification of symplectic Lie algebras \cite{MR2307912} it follows every conformally flat almost-K\"ahler Lie algebra of pointwise constant Hermitian holomorphic sectional curvature is K\"ahler. 

\subsection*{Acknowledgements}
Both authors are grateful to Anna Fino, Gueo Grantcharov, Simon Salamon for useful discussions.
The first named author was supported in part by a PSC-CUNY award \#60053-00 48, jointly funded by The Professional Staff Congress and The City University of New York.
The first named author thanks all the members of the Math Department of University of Turin for their warm hospitality during his stay in June 2017.
\section{Preliminaries}
%
On an almost-K\"ahler manifold $(M,\omega,J,g)$ the {\em canonical Hermitian connection} $\nabla$ is defined by the following formula ~\cite{MR0066733,MR1456265} 
\[
\nabla_XY=D^g_XY-\frac{1}{2}J\left( D^g_XJ\right)Y,
\]
where $D^g$ is the Levi-Civita connection of the metric $g$. We denote by $R^\nabla$ and $R^g$ the curvatures of $\nabla$ and $D^g$, respectively. We adopt the convention
 $R^\nabla_{X,Y}=-[\nabla_X,\nabla_Y]+\nabla_{[X,Y]}$, where $X,Y$ are vector fields on $M$ and $[\cdot,\cdot]$ is the commutator.
The {\em Hermitian homolomorphic sectional curvature}  $H$ is defined by 
\[
H(X):=\frac{g(R^\nabla_{X,JX}X,JX)}{g(X,X)g(X,X)}.
\]
We say that the Hermitian holomorphic sectional curvature is pointwise constant $k$ if 
\[
g(R^\nabla_{X,JX}X,JX)=k,
\]
at any point $p\in M$ and for any vector field $X$ of unit length with respect to $g.$

Now we focus on dimension $4$. In this case the bundle of $2$-forms $\Lambda^2M$
admits, under the action of the (Riemannian) Hodge operator, the $g$-orthogonal splitting
\[
\Lambda^2M=\Lambda^+M\oplus \Lambda^-M,
\]
where $\Lambda^+ M$ and $\Lambda^-M$ correspond to the bundles of self-dual and anti-self-dual $2$-forms, respectively.
The Riemannian curvature $R^g,$ viewed as a symmetric linear operator of $2$-forms $\Omega^2(M)=\Omega^+(M)\oplus\Omega^-(M)$, decomposes as 
\begin{equation}\label{curvature_operator}
R^g=\left(\begin{array}{cc}W^++\frac{s^g}{12}\mathrm{Id}|_{\Omega^+(M)} & \tilde{r}_0|_{{\Omega^-(M)}} \\ \tilde{r}_0|_{{\Omega^+(M)}} & W^-+\frac{s^g}{12}\mathrm{Id}|_{\Omega^-(M)}\end{array}\right),
\end{equation}
where $W^+$ and $W^-$ are called the self-dual Weyl tensor and the anti-self-dual Weyl tensor, respectively. Both $W^+$ and $W^-$ are trace-free. Here $s^g$ denotes the Riemannian scalar curvature
and $\tilde{r}_0$ is the Kulkarni--Nomizu extension of the trace-free part $r_0=r^g-\frac{s^g}{4}g$ of the Riemannian Ricci tensor $r^g.$
The manifold $(M,g)$ is said to be self-dual (anti-self-dual) if $W^-=0$ ($W^+=0$). $(M,g)$ is conformally flat if $W^+=W^-=0.$
The Riemannian curvature $R^g$ can also be viewed as a $(4,0)$-tensor via the metric, as well as the Weyl tensor $W=W^++W^-.$
Note also the decomposition of $W^+$ in the almost-K\"ahler case (see~\cite{MR1604803} or \cite{MR1921552} for more details)
\begin{equation}\label{weyl}
W^+=\left(\begin{array}{cc}\|N_J\|^2+\frac{s^g}{6} & W_\omega^+ \\ W_\omega^+ & W^+_{00}-\frac{1}{2}\left( \|N_J\|^2+\frac{s^g}{6}  \right)\mathrm{Id}|_{\Omega^{J,-}(M)}\end{array}\right),
\end{equation}
where $\Omega^{J,-}(M)$ are the $J$-anti-invariant $2$-forms on $M$ and $N_J$ is the Nijenhuis tensor of $J.$
It turns out that if a $4$-dimensional almost-K\"ahler manifold $(M,\omega,J,g)$ is of pointwise constant Hermitian holomorphic sectional curvature, then $W^-=0$~\cite{Lejmi-Markus}.
\begin{thm}\cite{Lejmi-Markus}\label{self_dual}
Let $(M,\omega,J,g)$ be an almost-K\"ahler manifold of dimension $4$. If $M$ is of pointwise constant Hermitian holomoprhic sectional curvature then $(M,g)$ is self-dual.
\end{thm}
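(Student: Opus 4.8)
The plan is to convert the pointwise constant Hermitian holomorphic sectional curvature condition into an algebraic statement about the curvature operator $R^\nabla$, and then transport it to $R^g$ via the comparison of $\nabla$ with the Levi-Civita connection. Fix $p\in M$. Since $\nabla$ preserves $J$, the two-forms $g(R^\nabla_{X,Y}\,\cdot\,,\cdot)$ are $J$-invariant, so $R^\nabla$ — regarded as an operator on two-forms, cf.~\eqref{curvature_operator} — has image in the bundle $\Lambda^{1,1}_{\mathbb R}=\mathbb R\omega\oplus\Lambda^-$ of $J$-invariant two-forms, where $\Lambda^-$ is the bundle of primitive $J$-invariant two-forms and $\Lambda^+=\mathbb R\omega\oplus\Omega^{J,-}$. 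For a unit vector $X$ the two-form $\phi_X:=X^\flat\wedge(JX)^\flat$ is $J$-invariant with $|\phi_X|^2=1$, one has $g(R^\nabla_{X,JX}X,JX)=\langle R^\nabla\phi_X,\phi_X\rangle$, and as $X$ runs over the unit sphere at $p$ the two-form $\phi_X$ runs exactly over the slice $\{\tfrac12\omega+\xi:\ \xi\in\Lambda^-_p,\ |\xi|^2=\tfrac12\}$. Hence $H\equiv k$ means that the quadratic form of the symmetrization $\tfrac12\big(\mathcal R+\mathcal R^{\mathrm t}\big)$ of the restriction $\mathcal R$ of $R^\nabla$ to $\Lambda^{1,1}_{\mathbb R}$ is constantly $k$ on that slice. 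Separating the parts even and odd under $\xi\mapsto-\xi$ then forces, at every point,
\[
\tfrac12\big(\mathcal R+\mathcal R^{\mathrm t}\big)=\rho\,\pi_{\mathbb R\omega}+\mu\,\pi_{\Lambda^-},\qquad\rho,\mu\in\mathbb R;
\]
in words, the symmetrized $\Lambda^-\!\to\!\Lambda^-$ part of $R^\nabla$ is a multiple of the identity and the symmetrized $\mathbb R\omega\leftrightarrow\Lambda^-$ coupling of $R^\nabla$ vanishes.

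Next I would use $\nabla=D^g-\tfrac12 J(D^g J)$ and the fact that on an almost-K\"ahler manifold $D^g J$ is determined by the Nijenhuis tensor $N_J$, to write $R^g=R^\nabla+\Psi$ with $\Psi$ an explicit tensor that is linear in $\nabla N_J$ and quadratic in $N_J$. Taking the $\Lambda^-\!\to\!\Lambda^-$ block of both sides and combining~\eqref{curvature_operator} with the previous paragraph gives
\[
W^-+\tfrac{s^g}{12}\,\mathrm{Id}\;=\;R^g\big|_{\Lambda^-\to\Lambda^-}\;=\;\mu\,\mathrm{Id}+\Theta ,
\]
where $\Theta$ is the symmetrized $\Lambda^-\!\to\!\Lambda^-$ block of $\Psi$; since $W^-$ is trace-free this yields $W^-=\Theta_0$, the trace-free part of $\Theta$. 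It remains to show $\Theta_0=0$.

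I expect this last point to be the real obstacle. The purely algebraic $N_J\otimes N_J$ contribution to $\Psi$ turns out to be pure trace on $\Lambda^-$; for the $\nabla N_J$ contribution one should use the first Bianchi identity for the connection with torsion $\nabla$, together with the curvature identities for almost-K\"ahler four-manifolds underlying~\eqref{weyl} (in the spirit of~\cite{MR1604803,MR1921552}), which express the $J$-anti-invariant part of the Ricci tensor through $\nabla N_J$; plugging in the vanishing of the $\mathbb R\omega\leftrightarrow\Lambda^-$ coupling obtained above should pin the remaining trace-free component of $\Theta$ to zero, so that $W^-=0$. The delicate part is to identify precisely how $\nabla N_J$ enters the anti-self-dual curvature block and to verify that the constraints from the first step annihilate exactly its trace-free part. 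As a concrete alternative one may run the whole argument in a $g$-orthonormal adapted coframe $(e^1,Je^1,e^3,Je^3)$: parametrize $N_J$, $\nabla N_J$ and $R^\nabla$, impose the scalar equations coming from constant $H$, and check directly that all five components of $W^-$ vanish.
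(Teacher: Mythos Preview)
The paper does not prove this theorem; it is quoted from \cite{Lejmi-Markus} and used as a black box, so there is no argument here to compare your proposal against.

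As for the proposal itself, the first half is sound: the reduction of the condition $H\equiv k$ to the statement that the symmetrized $\Lambda^-\!\to\!\Lambda^-$ block of $R^\nabla$ is a multiple of the identity, together with the vanishing of the symmetrized $\mathbb R\omega\leftrightarrow\Lambda^-$ coupling, is correct and is exactly the kind of algebraic input one needs. The genuine gap is the step you yourself flag as ``the real obstacle'': you have not shown that the trace-free part $\Theta_0$ of the $\Lambda^-\!\to\!\Lambda^-$ block of the correction tensor $\Psi=R^g-R^\nabla$ vanishes. The claim that the purely quadratic $N_J\otimes N_J$ contribution is pure trace on $\Lambda^-$ is plausible in dimension~$4$ (there is essentially one real degree of freedom in $N_J$ at a point), but it still needs to be checked; and for the $\nabla N_J$ contribution you only \emph{hope} that the Bianchi identity together with the vanishing of the $\mathbb R\omega\leftrightarrow\Lambda^-$ coupling will kill it. That is not an argument yet: a priori $\nabla N_J$ has more independent components than the single scalar constraint you have extracted, so one must explain which algebraic identities cut them down. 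Until that computation (or the adapted-coframe verification you mention as an alternative) is actually carried out, the proposal remains a plausible outline rather than a proof.
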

Remark that the compactness assumption is not needed in the above Theorem. Furthemore, M. Upmeier and the first named author~\cite{Lejmi-Markus}
proved in the compact case that if the sectional curvature is globally constant and if $k$ is non-negative then $(M,\omega,J,g)$ is K\"ahler.

\section{Lie algebras admitting non-conformally flat self-dual metrics}\label{self-dual}
In this section we study $4$-dimensional almost-K\"ahler self-dual Lie algebras.

In~\cite{MR1955629}, De Smedt and Salamon proved the a Lie algebra $\mathfrak{g}$ of a $4$-dimensional Lie group admitting a non-conformally flat left-invariant metric $g$
such that $W^+=0$ has a coframe $\{e^1,e^2,e^3,e^4\}$ satisfying the following structure equations  
\begin{eqnarray*}
de^1&=&0,\\
de^2&=&-e^{12}-\lambda e^{13},\\
de^3&=&\lambda e^{12}-e^{13},\\
de^4&=&-2e^{14}+e^{23},
\end{eqnarray*}
for some $\lambda\geq 0$, where $e^{12}=e^1\wedge e^2,$ etc. Furthermore,
the metric $g$ has the (oriented) orthonormal basis $\{\frac{1}{k}e_1,e_2,e_3,e_4\}$ for $k=1$ or $k=2,$ where $\{e_i\}$ is the dual basis of $\{e^i\}$.
We remark that all the Lie algebras of this kind are not unimodular and, consequently, their Lie groups do not admit compact quotients.
To get non-conformally
left-invariant metrics such that $W^-=0$, we only have to change the orientation and consider the opposite orientation given by the volume form 
\begin{equation}\label{orientation}
-|k|e^1\wedge e^2\wedge e^3\wedge e^4.
\end{equation}

Now, we will find the almost-K\"ahler structures on $\mathfrak{g}$ with $g$ as a Riemannian metric and compatible with the opposite orientation~(\ref{orientation}). A 2-form $\omega=\displaystyle\sum_{1\leq i<j\leq 4}\alpha_{ij}e^{ij}$ on $\mathfrak{g}$ is closed if 
\begin{eqnarray*}
d\omega&=&\alpha_{14}\left(-e^1\wedge e^{23}\right)+\alpha_{23}\left(-e^{12}\wedge e^{3}\right)+\alpha_{23}\left(-e^{2}\wedge -e^{13}\right)\\
&+&\alpha_{24}\left(\left(-e^{12}-\lambda e^{13}\right)\wedge e^{4}\right)+\alpha_{24}\left(-e^{2}\wedge -2e^{14}\right)\\
&+&\alpha_{34}\left(\left(\lambda e^{12}-e^{13}\right)\wedge e^{4}\right)+\alpha_{34}\left(-e^{3}\wedge -2e^{14}\right)\\
&=&e^{123}(-\alpha_{14}-2\alpha_{23})+e^{124}(-3\alpha_{24}+\lambda\alpha_{34})+e^{134}(-\lambda\alpha_{24}-3\alpha_{34}),\\
&=&0.
\end{eqnarray*}
We deduce that 
$$
-2\alpha_{23}=\alpha_{14},\quad 
\lambda\alpha_{34} =3\alpha_{24},\quad 
\lambda\alpha_{24}=-3\alpha_{34}.
$$
We conclude that $\omega$ is closed if and only $\alpha_{34}=\alpha_{24}=0$ and $\alpha_{14}=-2\alpha_{23}.$
Hence, $\omega$ is closed and non-degenerate if it is of the form $$\omega=ae^{12}+be^{13}-2ce^{14}+ce^{23}$$
for some $a,b,c\in\mathbb{R}$ with $c\neq 0.$
Now, one can easily check that $\omega$ is compatible with the metric $g$ if and only if $a=b=0, c^2=1$ and $k^2=4.$ As a result, we get the following
\begin{cor}\label{non_confor_flat}
Any $4$-dimensional almost-K\"ahler Lie algebra with a self-dual non-conformally flat metric is K\"ahler. 
\end{cor}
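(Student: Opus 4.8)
The plan is to combine the De Smedt--Salamon classification with an explicit determination of the compatible almost-K\"ahler form, and then to verify directly that the resulting almost-complex structure is integrable. Since the hypothesis is that $g$ is self-dual ($W^-=0$) and non-conformally flat, I would first invoke \cite{MR1955629}: after reversing the orientation (so that $W^+=0$ becomes $W^-=0$), $\mathfrak g$ admits a coframe $\{e^1,e^2,e^3,e^4\}$ satisfying the structure equations recalled above for some $\lambda\ge 0$, with $g$ having orthonormal coframe $\{ke^1,e^2,e^3,e^4\}$, $k\in\{1,2\}$, and with the relevant orientation given by the volume form $-|k|\,e^{1234}$.

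The second step, carried out in the discussion preceding the statement, is to list the closed, non-degenerate $2$-forms $\omega$ compatible with $g$ and this orientation. Imposing $d\omega=0$ forces $\omega=ae^{12}+be^{13}-2ce^{14}+ce^{23}$, and requiring that the $g$-skew endomorphism $A$ with $\omega=g(A\cdot,\cdot)$ satisfy $A^2=-\mathrm{Id}$ then forces $a=b=0$, $c^2=1$ and $k^2=4$. Thus $k=2$, the metric is $g=4(e^1)^2+(e^2)^2+(e^3)^2+(e^4)^2$, and $\omega=-2ce^{14}+ce^{23}$ with $c=\pm1$; in particular $\omega$ is determined up to sign, and $\omega\wedge\omega$ is indeed a positive multiple of $-|k|\,e^{1234}$.

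It then remains to check that the induced almost-complex structure $J=A$ is integrable. With $c=1$, the space of $(1,0)$-forms is spanned by $\theta^1=2e^1-ie^4$ and $\theta^2=e^2+ie^3$, and the structure equations give $d\theta^1=2ie^{14}-ie^{23}$ and $d\theta^2=-(1-i\lambda)\,e^1\wedge\theta^2$. Rewriting $e^1,e^2,e^3,e^4$ in the complex coframe $\{\theta^1,\theta^2,\bar\theta^1,\bar\theta^2\}$ shows that $d\theta^1$ is of pure type $(1,1)$ and $d\theta^2$ of type $(2,0)+(1,1)$; neither has a $(0,2)$-component, so the ideal generated by $\theta^1,\theta^2$ is a differential ideal, $N_J=0$, and $J$ is integrable. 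Since $\omega$ is closed and $g$-compatible, $(\mathfrak g,\omega,J,g)$ is K\"ahler, which proves the corollary.

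The step needing genuine care is the last one: one must correctly identify the $(1,0)$-forms relative to the non-standard orthonormal coframe $\{2e^1,e^2,e^3,e^4\}$ and then keep track of bidegrees when passing between the real and complex coframes. Everything else --- the linear system coming from $d\omega=0$ and the quadratic conditions coming from $A^2=-\mathrm{Id}$ --- is routine, and no structural difficulty arises once \cite{MR1955629} has been applied.
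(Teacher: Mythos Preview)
Your proof is correct and follows essentially the same route as the paper: invoke the De~Smedt--Salamon classification, determine the compatible closed non-degenerate $2$-forms (obtaining $\omega=\mp 2e^{14}\pm e^{23}$ and $k=2$), and then verify integrability of the resulting $J$. The only cosmetic difference is that the paper checks integrability by a direct Nijenhuis-tensor computation, whereas you verify that $d\theta^1,d\theta^2$ have no $(0,2)$-component; these are equivalent criteria and your bidegree computation is accurate.
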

\begin{proof}
From the above discussion, we obtain that, for such almost-K\"ahler structures $(\omega,J,g)$, the $2$-form $\omega$ takes the following expression
\[
\omega=\mp 2\,e^{14}\pm \,e^{23}\,.
\]
It is also compatible with the opposite orientation~(\ref{orientation}). Moreover, since $g$ is the standard metric with respect to basis $\{e_i\}$, the almost-complex structure $J$ satisfies 
\[
Je_1=- 2e_4,\quad Je_2= e_3,
\]
or
\[
Je_1=2e_4,\quad Je_2=-e_3.
\]
Then a direct computation of the Nijenhuis tensor yields that $J$ is integrable and that $(\omega,J,g)$ is a K\"ahler structure, as required. 
\end{proof}
\section{Lie algebras with conformally flat left-invariant metrics}
In this section we study conformally flat almost-K\"aher structures on $4$-dimensional Lie algebras.

The spectrum of the Riemannian curvature operators on $4$-dimensional Lie algebras with a conformally flat metrics was studied in~\cite[Theorem 2]{MR3442771}.
It turns out that there are only eight models of $4$-dimensional Lie algebras admitting a conformally flat metric. Moreover, by using the classification of $4$-dimensional symplectic Lie algebras in~\cite{MR2307912}, we get that 
the only three conformally flat Lie algebras admitting symplectic  structures are the following 
\begin{itemize}
\item the abelian Lie algebra;
\item $\mathfrak{r}\mathfrak{r}_{3,0}:\quad$ $[e_1,e_3]=-e_2,\quad [e_2,e_3]=e_1;$
\item $\mathfrak{r}_2^\prime:\quad$ $[e_1,e_3]=e_3,\quad [e_1,e_4]=e_4,\quad [e_2,e_3]=e_4,\quad[e_2,e_4]=-e_3.$
\end{itemize}

The abelian Lie algebra and $\mathfrak{r}\mathfrak{r}_{3,0}$ are unimodular and the spectrum of the Riemannian curvature operator~(\ref{curvature_operator}) of their conformally flat metrics
is $\{0,0,0,0,0,0\}$. In particular in these algebras any conformally flat metric has vanishing scalar curvature.
\begin{prop}\label{abelian_case}
Any conformally flat almost-K\"ahler structure on 
$\mathfrak{r}\mathfrak{r}_{3,0}$ is K\"ahler.
\end{prop}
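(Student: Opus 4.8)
The plan is to normalize the metric, classify the compatible symplectic forms, and then read off integrability of $J$.

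As recalled just above, every conformally flat left-invariant metric on $\mathfrak{r}\mathfrak{r}_{3,0}$ has curvature operator with spectrum $\{0,0,0,0,0,0\}$, hence is flat. Flat left-invariant metrics are rigid, so --- using the automorphisms of $\mathfrak{r}\mathfrak{r}_{3,0}$, which preserve the derived ideal $\langle e_1,e_2\rangle$ and the centre $\langle e_4\rangle$, together with an overall homothety of $g$, none of which changes whether an almost-K\"ahler structure is K\"ahler --- I would reduce to the case where $g$ is the standard metric with respect to an invariant coframe $\{e^1,e^2,e^3,e^4\}$ satisfying $de^1=-e^{23}$, $de^2=e^{13}$, $de^3=de^4=0$; this is the ``special coframe'' normal form for conformally flat metrics on $\mathfrak{r}\mathfrak{r}_{3,0}$. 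I expect this reduction to be the only genuine point of the proof; after it, the rest is a short calculation, exactly as for Corollary~\ref{non_confor_flat}.

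Next I would determine all closed non-degenerate $2$-forms $\omega=\sum_{i<j}\alpha_{ij}e^{ij}$ compatible with this $g$. From the structure equations, $d\omega=-\alpha_{14}\,e^{234}+\alpha_{24}\,e^{134}$, so $d\omega=0$ forces $\alpha_{14}=\alpha_{24}=0$; then $\omega\wedge\omega=2\alpha_{12}\alpha_{34}\,e^{1234}$, so non-degeneracy forces $\alpha_{12}\alpha_{34}\neq0$; and demanding that the associated endomorphism $J=g^{-1}\omega$ satisfy $J^2=-\mathrm{Id}$ forces in addition $\alpha_{13}=\alpha_{23}=0$ and $\alpha_{12}^2=\alpha_{34}^2=1$. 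Hence the only compatible almost-K\"ahler forms are $\omega=\pm e^{12}\pm e^{34}$, with $Je_1=\alpha_{12}e_2$ and $Je_3=\alpha_{34}e_4$; in particular $J$ makes $\langle e_1,e_2\rangle$ and $\langle e_3,e_4\rangle$ into complex lines.

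Finally I would compute the Nijenhuis tensor of $J$ on the basis. By antisymmetry and $N_J(X,JX)=0$ it suffices to evaluate $N_J$ on the pairs $(e_1,e_3)$, $(e_2,e_3)$, $(e_1,e_4)$, $(e_2,e_4)$; in each of these the only non-zero brackets that occur are the action of $e_3$ on the plane $\langle e_1,e_2\rangle$ by infinitesimal rotation, and they cancel exactly against the accompanying $J$-terms because $\alpha_{12}^2=1$. Therefore $N_J\equiv0$, so $J$ is integrable, and $(\omega,J,g)$ --- being almost-K\"ahler with integrable $J$ --- is K\"ahler, which proves the proposition. (The same argument trivializes on the abelian Lie algebra, so $\mathfrak{r}\mathfrak{r}_{3,0}$ is the only conformally flat unimodular case where anything has to be checked.)
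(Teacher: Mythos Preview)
Your approach is genuinely different from the paper's, and the difference is instructive. The paper does not normalize the metric at all: it simply observes that the curvature-operator spectrum $\{0,\dots,0\}$ forces $s^g=0$, and then the decomposition~(\ref{weyl}) gives
\[
\|W^+\|^2=2\|W^+_\omega\|^2+\|W_{00}^+\|^2+\tfrac{3}{2}\bigl(\|N_J\|^2+\tfrac{s^g}{6}\bigr)^2,
\]
so $W^+=0$ and $s^g=0$ immediately yield $\|N_J\|=0$. That is the entire proof.

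Your route---normalize the flat metric, classify compatible $\omega$, check $N_J$ by hand---is in the spirit of Section~\ref{self-dual} and the calculations you sketch after the normalization are correct. But the step you yourself identify as ``the only genuine point of the proof'' is asserted rather than proved. It is not free: automorphisms of $\mathfrak{r}\mathfrak{r}_{3,0}$ act on the derived ideal $\langle e_1,e_2\rangle$ only by plane similarities (the bracket relations force the restriction to be $\bigl(\begin{smallmatrix}a&-b\\ b&a\end{smallmatrix}\bigr)$ or $\bigl(\begin{smallmatrix}a&b\\ b&-a\end{smallmatrix}\bigr)$), so you cannot orthonormalize an arbitrary inner product on that plane by automorphism alone; you would first have to show that \emph{flatness} forces $g|_{\langle e_1,e_2\rangle}$ to be a scalar multiple of the identity, and then deal with the cross-terms involving $e_3,e_4$. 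This can be done, but it is a genuine computation you have not supplied. The paper's curvature-identity argument bypasses all of this in three lines, which is what makes it the cleaner choice here.
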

\begin{proof}
Let $(\omega,J,g)$ be a confomally flat almost-K\"ahler structure on $\mathfrak{r}\mathfrak{r}_{3,0}$. From the above discussion the Riemannian scalar curvature $s^g$ is vanishing. 
Now, from the decomposition~(\ref{weyl}), we have 
\[
\|W^+\|^2=2\|W^+_\omega\|^2+\|W_{00}^+\|^2+\frac{3}{2}\left(\|N_J\|^2+\frac{s^g}{6}  \right)^2.
\]
Since $g$ is conformally flat, $\|N_J\|^2+\frac{s^g}{6}=0$. So we deduce that $s^g=\|N_J\|^2=0$
and that $J$ is integrable.
\end{proof}

Now, we study the remaining Lie algebra $\mathfrak{r}_2^\prime$. This algebra is not unimodular and from the spectrum of the Riemannian curvature operator
we can deduce that every conformally flat metric on $\mathfrak{r}_2^\prime$ has negative scalar curvature.
In terms of $1$-forms the structure equations of $\mathfrak{r}_2^\prime$ take the following expression  
\begin{align*}
de^1&=0,\\
de^2&=0,\\
de^3&=-e^{13}+e^{24},\\
de^4&=-e^{23}-e^{14},
\end{align*}
where $\{e^i\}$ is a basis of $\left(\mathfrak{r}_2^\prime\right)^*$. 

We give a description of all the possible conformally flat metrics on $\mathfrak{r}_2^\prime.$ 
Let $g$ be an arbitrary metric on $\mathfrak{r}_2^\prime$. 
By applying Gram--Schmidt algorithm we construct from the basis $\{e^i\}$  a $g$-orthonormal basis $\{f^i\}$
of $\left(\mathfrak{r}_2^\prime\right)^*$ such that 
\begin{align*}
f^1&=a_1e^1,\\
f^2&=a_2f^1+a_3e^2,\\
f^3&=a_4f^1+a_5f^2+a_6e^3,\\
f^4&=a_7f^1+a_8f^2+a_9f^3+a_{10}e^4,
\end{align*}
for some $a_i\in \mathbb{R}$ with  $a_1,a_3,a_6,a_{10}>0$. By differentiating the coframe $\{f^i\}$ we get 
\begin{eqnarray*}
df^1&=&0,\\
df^2&=&0,\\
df^3&=&-a_6e^{13}+a_6e^{24}\\
df^4&=&a_9\left(-a_6e^{13}+a_6e^{24}\right)-a_{10}e^{23}-a_{10}e^{14}.
\end{eqnarray*}
And so we deduce the following structure equations with respect to the coframe $\{f^i\}$:
\begin{align*}
df^1&=0,\\
df^2&=0,\\
df^3&=\frac{a_1a_2a_6a_8+a_1a_6a_7+a_{10}a_3a_5}{a_1a_3a_{10}}f^{12}+\frac{a_1a_2a_6a_9-a_{10}a_3}{a_1a_3a_{10}}f^{13}-\frac{a_6a_2}{a_3a_{10}}f^{14}-\frac{a_6a_9}{a_3a_{10}}f^{23}+\frac{a_6}{a_3a_{10}}f^{24}\\
df^4&=\frac{a_1a_2a_6^2a_8a_9-a_1a_{10}^2a_2a_5+a_1a_6^2a_7a_9+a_{10}a_3a_5a_6a_9-a_1a_{10}^2a_4+a_{10}a_3a_6a_8}{a_1a_3a_6a_{10}}f^{12}\\
&+\frac{a_2a_6^2a_9^2+a_{10}^2}{a_3a_{10}a_6}f^{13}-\frac{a_1a_2a_6a_9+a_{10}a_3}{a_1a_3a_{10}}f^{14}-\frac{a_6^2a_9^2+a_{10}^2}{a_3a_{10}a_6}f^{23}+\frac{a_9a_6}{a_3a_{10}}f^{24}.
\end{align*}
We need to find all possible solutions for the vanishing of the Weyl tensor $W=W^++W^-$.
A direct computation gives 
$$
W\left(f_1,f_3,f_2,f_3\right)=\frac{a_2\left((a_9^2-1)a_6^2+a_{10}^2\right)\left((a_9^2+1)a_6^2+a_{10}^2\right)a_1-2a_{10}a_3a_6^3a_9}{a_1a_3^2a_{10}^2a_6^2}
$$
and so 
$$
W=0\Longrightarrow a_2\left((a_9^2-1)a_6^2+a_{10}^2\right)\left((a_9^2+1)a_6^2+a_{10}^2\right)a_1-2a_{10}a_3a_6^3a_9=0\,. 
$$
Notice that since $a_1,a_3,a_6,a_{10}>0$, $(a_9^2+1)a_6^2+a_{10}^2\neq 0.$ 
Assume $g$ conformally flat (i.e $W=0$).  
If we assume $(a_9^2-1)a_6^2+a_{10}^2\neq 0$, then we get 
\begin{equation}\label{a2}
a_2=\frac{2a_{10}a_3a_6^3a_9}{\left((a_9^2-1)a_6^2+a_{10}^2\right)\left((a_9^2+1)a_6^2+a_{10}^2\right)a_1},
\end{equation}
and we get assuming~(\ref{a2})
$$W(f_1,f_3,f_2,f_4)=\frac{\left(a_6^2a_9^2+a_{10}^2-2a_{10}a_6+a_6^2\right)\left(a_6^2a_9^2+a_{10}^2+2a_{10}a_6+a_6^2\right)}{-2a_3a_{10}(a_6^2a_9^2+a_{10}^2-a_6^2)a_6a_1}=0$$
Since $a_6^2a_9^2+a_{10}^2+2a_{10}a_6+a_6^2\neq 0$ we deduce  
$$
a_6^2a_9^2+a_{10}^2-2a_{10}a_6+a_6^2=a_6^2a_9^2+(a_{10}-a_6)^2=0\,. 
$$ 
Hence $a_{10}=a_6$ and $a_9=0.$
This is in contradiction with our hypothesis $(a_9^2-1)a_6^2+a_{10}^2\neq 0$ and we conclude that $(a_9^2-1)a_6^2+a_{10}^2= 0$. Thus,
\begin{equation}\label{a9}
a_9=\pm\sqrt{1-\frac{a^2_{10}}{a_6^2}}.
\end{equation}
We get assuming~(\ref{a9})
$$W(f_1, f_3, f_2, f_3)=\pm\frac{a_6\sqrt{ 1-\frac{a^2_{10}}{a^2_6} }}{a_1a_3a_{10}}=0$$
We deduce that (recall that $a_6,a_{10}>0$)
\begin{equation}\label{a10}
a_{10}=a_6.
\end{equation}
We obtain assuming~(\ref{a9}) and~(\ref{a10})
$$W(f_1, f_2, f_2, f_3)=\frac{a_1a_2a_5+a_1a_4-a_3a_8}{-4a_3^2a_1}=0.$$
Hence, 
\begin{equation}\label{a8}
a_8=\frac{a_1a_2a_5+a_1a_4}{a_3}.
\end{equation}
Assuming~(\ref{a9}),~(\ref{a10}) and~(\ref{a8}), we get 
$$W(f_2, f_4, f_3, f_4)=\frac{a_1^2a_2^2a_5+a_1^2a_2a_4+a_1a_3a_7+a_3^2a_5}{-4a_3^2a^2_1}=0.$$
We deduce that
\begin{equation}\label{a7}
a_7=\frac{-a_1^2a_2^2a_5-a_1^2a_2a_4-a_3^2a_5}{a_1a_3}.
\end{equation}
Under the conditions ~(\ref{a9}),~(\ref{a10}),~(\ref{a8}) and~(\ref{a7}), one can verify that the Weyl tensor vanishes and so we get 
\begin{prop}\label{conf_flat}
Conformally flat metrics on $\mathfrak{r}_2^\prime$ are described by the orthonormal basis $\{f^i\}$ of $\left(\mathfrak{r}_2^\prime\right)^*$
\begin{eqnarray*}
f^1&=&a_1e^1,\\
f^2&=&a_2f^1+a_3e^2,\\
f^3&=&a_4f^1+a_5f^2+a_6e^3,\\
f^4&=&a_7f^1+a_8f^2+a_9f^3+a_{10}e^4.
\end{eqnarray*}
($a_1,a_3,a_6,a_{10}>0$) subject to the relations
\begin{enumerate}
\item $a_6=a_{10}$,\\
\item $a_9=0,$\\
\item $a_8=\frac{a_1a_2a_5+a_1a_4}{a_3},$\\
\item $a_7=\frac{-a_1^2a_2^2a_5-a_1^2a_2a_4-a_3^2a_5}{a_1a_3}.$
\end{enumerate}
\end{prop}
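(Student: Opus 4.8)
The statement to prove is Proposition~\ref{conf_flat}, describing conformally flat metrics on $\mathfrak{r}_2^\prime$ via a distinguished orthonormal coframe $\{f^i\}$. The plan is to take any left-invariant metric $g$ on $\mathfrak{r}_2^\prime$ and run the Gram--Schmidt procedure on the fixed coframe $\{e^i\}$ appearing in the structure equations, which produces an upper-triangular change of basis governed by ten real parameters $a_1,\dots,a_{10}$ with $a_1,a_3,a_6,a_{10}>0$. Differentiating the new coframe using $df^1=df^2=0$, $df^3=-e^{13}+e^{24}$, $df^4=-e^{23}-e^{14}$ and re-expressing everything in terms of the $\{f^i\}$ gives the explicit (rational in the $a_i$) structure equations recorded just before the statement. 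Every conformally flat metric arises this way for some choice of parameters, so it suffices to determine exactly which parameter tuples make $W=W^++W^-$ vanish identically.

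The core of the argument is then a finite computation with the full curvature-as-$(4,0)$-tensor $W$, imposing vanishing on a well-chosen sequence of components $W(f_i,f_j,f_k,f_l)$ and solving the resulting polynomial system. Concretely I would first compute $W(f_1,f_3,f_2,f_3)$; its numerator factors as $a_1a_2\bigl((a_9^2-1)a_6^2+a_{10}^2\bigr)\bigl((a_9^2+1)a_6^2+a_{10}^2\bigr)-2a_{10}a_3a_6^3a_9$, and since $a_1,a_3,a_6,a_{10}>0$ forces $(a_9^2+1)a_6^2+a_{10}^2\neq 0$, a case split on whether $(a_9^2-1)a_6^2+a_{10}^2$ vanishes is natural. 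In the "nonvanishing" branch one solves for $a_2$, substitutes into $W(f_1,f_3,f_2,f_4)$, and reaches a contradiction; hence $(a_9^2-1)a_6^2+a_{10}^2=0$, giving $a_9=\pm\sqrt{1-a_{10}^2/a_6^2}$. Feeding this back into $W(f_1,f_3,f_2,f_3)$ forces $a_{10}=a_6$, hence $a_9=0$; then $W(f_1,f_2,f_2,f_3)=0$ pins down $a_8$, and $W(f_2,f_4,f_3,f_4)=0$ pins down $a_7$, yielding relations (i)--(iv). The final step is to verify that under (i)--(iv) the \emph{entire} Weyl tensor vanishes, not merely the components used, which closes the equivalence.

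I expect the main obstacle to be bookkeeping rather than conceptual: the structure constants in the $\{f^i\}$ coframe are unwieldy rational expressions, so computing $W$ (via the Levi-Civita connection of the now-orthonormal $\{f^i\}$, then the Riemann tensor, then the Weyl projection) and factoring its components cleanly is delicate, and one must choose the order of components carefully so that each new equation is solvable for a single parameter without generating spurious branches. One should also keep track of the sign ambiguity in $a_9$ before it collapses, and confirm that no positivity constraint on $a_1,a_3,a_6,a_{10}$ is violated along the way (this is what rules out the bad branch). Since all of this is a direct, if laborious, symbolic computation, the proof is essentially the displayed chain of reductions together with a final consistency check.
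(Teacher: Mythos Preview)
Your proposal is correct and follows essentially the same route as the paper: Gram--Schmidt to an upper-triangular orthonormal coframe with parameters $a_1,\dots,a_{10}$, then successively forcing the same Weyl components $W(f_1,f_3,f_2,f_3)$, $W(f_1,f_3,f_2,f_4)$, $W(f_1,f_2,f_2,f_3)$, $W(f_2,f_4,f_3,f_4)$ to vanish (with the same case split on $(a_9^2-1)a_6^2+a_{10}^2$), and a final verification that (i)--(iv) kill all of $W$. The only slip is notational---in your differentiation step you wrote $df^3,df^4$ where you meant $de^3,de^4$---but the computation you describe is the intended one.
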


Now, we consider a conformally flat almost-K\"ahler structures $(\omega,J,g)$ on $\mathfrak{r}_2^\prime$. 
The symplectic form can be written as 
\[
\omega=b_1f^{12}+b_2f^{13}+b_3f^{14}+b_4f^{23}+b_5f^{24}+b_6f^{34},
\]
where $b_i\in\mathbb{R}$ and the basis $\{f^i\}$ is a $g$-orthonormal basis of $\left(\mathfrak{r}_2^\prime\right)^*$ as described in Proposition~\ref{conf_flat}.
In order for $\omega$ to be closed it is easy to check that (keeping in mind that $a_1,a_3,a_6>0$)
\begin{enumerate}
\item $b_6=0,$\\
\item $b_5:=\frac{-a_1a_2a_6b_4-a_1a_6b_2}{a_3a_6},$\\
\item $b_4 =\frac{-a_1^2a_2b_2+a_1a_3b_3}{a_1^2a_2^2+a_3^2}.$
\end{enumerate}
and the 2-form $\omega$ writes as 
$$\omega=b_1f^{12}+b_2f^{13}+b_3f^{14}-\frac{a_1(a_1a_2b_2-a_3b_3)}{a_1^2a_2^2+a_3^2}f^{23}-\frac{a_1(a_1a_2b_3+a_3b_2)}{a_1^2a_2^2+a_3^2}f^{24}.$$
The matrix 
\[
J=\left[\begin{array}{cccc}0 & b_1 & b_2 & b_3 \\-b_1 & 0 & -\frac{a_1(a_1a_2b_2-a_3b_3)}{a_1^2a_2^2+a_3^2} & -\frac{a_1(a_1a_2b_3+a_3b_2)}{a_1^2a_2^2+a_3^2} \\-b_2 & \frac{a_1(a_1a_2b_2-a_3b_3)}{a_1^2a_2^2+a_3^2} & 0 & 0 \\-b_3 & \frac{a_1(a_1a_2b_3+a_3b_2)}{a_1^2a_2^2+a_3^2}  & 0 & 0\end{array}\right]
\]
has to satisfy $J^2=-\mathrm{Id}$, in order for $(\omega,J,g)$ to be an almost-K\"ahler structure.
From $J^2=-\mathrm{Id}$ we see that $b_1b_2=b_1b_3=0.$ If we suppose that $b_1\neq 0$ then $b_2=b_3=0$ but then $\omega\wedge\omega=0.$
Hence, we deduce that 
\begin{equation}\label{b1}
b_1=0.
\end{equation}
Assuming~(\ref{b1}), we get, from $J^2=-\mathrm{Id}$, that $$b_2^2+b_3^2=1.$$
We also obtain $$\frac{b_2a_1(a_1a_2b_2-a_3b_3)}{a_1^2a_2^2+a_3^2}+\frac{b_3a_1(a_1a_2b_3+a_3b_2)}{a_1^2a_2^2+a_3^2}=0.$$ It implies that $a_2a_1^2(b_2^2+b_3^2)=a_2a_1^2=0.$
We deduce that $a_2=0$ and then we get $a_3=a_1.$
\begin{prop}\label{conf_ak}
Conformally flat almost-K\"ahler structures $(\omega,J,g)$ on $\mathfrak{r}_2^\prime$ are described by 
$$g=f^1\otimes f^1+f^2\otimes f^2+f^3\otimes f^3+f^4\otimes f^4,$$
$$\omega=b_2f^{13}+b_3f^{14}-\frac{a_1(a_1a_2b_2-a_3b_3)}{a_1^2a_2^2+a_3^2}f^{23}-\frac{a_1(a_1a_2b_3+a_3b_2)}{a_1^2a_2^2+a_3^2}f^{24}.$$
The $\{f^i\}$ are described in Proposition~\ref{conf_flat}. The constants $a_1,a_2,a_3,a_4,a_5,a_6,b_2,b_3$ are subject to the relations
\begin{enumerate}
\item $a_3=a_1$;\\
\item $b_2^2+b_3^2=1;$\\
\item $a_2=0.$\\
\end{enumerate}
\end{prop}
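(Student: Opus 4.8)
The plan is to reduce the question to linear algebra in the $g$-orthonormal coframe $\{f^1,f^2,f^3,f^4\}$ furnished by Proposition~\ref{conf_flat}. Since $g=\sum_i f^i\otimes f^i$ in this coframe, a compatible almost-complex structure is completely encoded by the symplectic form: writing $\omega=b_1f^{12}+b_2f^{13}+b_3f^{14}+b_4f^{23}+b_5f^{24}+b_6f^{34}$, the endomorphism $J$ determined by $g(J\cdot,\cdot)=\omega(\cdot,\cdot)$ is the matrix with these entries (with the evident signs), automatically $g$-skew-symmetric, and $(\omega,J,g)$ is almost-K\"ahler exactly when $d\omega=0$ and $J^2=-\mathrm{Id}$.

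The first step is to write $df^3$ and $df^4$ in the coframe $\{f^i\}$, simplified by the four relations of Proposition~\ref{conf_flat}, and then impose $d\omega=0$; this is a homogeneous linear system in $b_1,\dots,b_6$. Solving it forces $b_6=0$ and expresses $b_4,b_5$ as explicit functions of $b_2,b_3$ and the parameters $a_i$, leaving $b_1,b_2,b_3$ free. This yields the displayed form of $\omega$ together with the explicit matrix $J$.

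The second step is to impose $J^2=-\mathrm{Id}$ entrywise. Two entries of $J^2$ give $b_1b_2=b_1b_3=0$; if $b_1\neq0$ then $b_2=b_3=0$, hence also $b_4=b_5=0$ by the closure relations, so $\omega=b_1f^{12}$ and $\omega\wedge\omega=0$, i.e.\ $\omega$ is degenerate. Hence $b_1=0$. With $b_1=0$ the operator $J$ is governed by the single $2\times2$ block $A$ with first row $(b_2,b_3)$ and second row read off from the closure expressions, coupling the $\{f^1,f^2\}$-plane to the $\{f^3,f^4\}$-plane, and $J^2=-\mathrm{Id}$ is equivalent to $AA^\top=\mathrm{Id}$. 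Its $(1,1)$-entry gives $b_2^2+b_3^2=1$, its off-diagonal entry reduces to $a_1^2a_2(b_2^2+b_3^2)=a_1^2a_2=0$, whence $a_2=0$, and its $(2,2)$-entry then becomes $a_1^2/a_3^2=1$, so $a_3=a_1$. Substituting $b_1=0$, $b_2^2+b_3^2=1$, $a_2=0$ and $a_3=a_1$ back into $g$, $\omega$ and $J$ gives exactly the description in the statement, and one checks directly that these data indeed satisfy $J^2=-\mathrm{Id}$.

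The only genuine obstacle is bookkeeping: the intermediate formulas for $df^3,df^4$ and for $J^2$ in terms of $a_1,\dots,a_{10}$ are bulky rational expressions, so the computation should be organized---ideally with the help of computer algebra---so that the successive reductions $b_6=0$, then $b_1=0$ from non-degeneracy, then $a_2=0$, then $a_3=a_1$ peel off cleanly; once $a_2=0$ and $a_3=a_1$ the block $A$ is manifestly orthogonal and no further constraints arise.
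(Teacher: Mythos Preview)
Your proposal is correct and follows essentially the same route as the paper: impose $d\omega=0$ in the coframe $\{f^i\}$ of Proposition~\ref{conf_flat} to get $b_6=0$ and the displayed expressions for $b_4,b_5$, then use $J^2=-\mathrm{Id}$ together with non-degeneracy to force $b_1=0$, $b_2^2+b_3^2=1$, $a_2=0$, and $a_3=a_1$, in that order. Your repackaging of the $J^2=-\mathrm{Id}$ condition (once $b_1=b_6=0$) as $AA^\top=\mathrm{Id}$ for the $2\times2$ off-diagonal block is a clean way to organize exactly the three scalar equations the paper writes out explicitly, but the substance is identical.
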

All the conformally flat almost-K\"ahler structures given in Proposition~\ref{conf_ak} are non-K\"ahler. Indeed if we compute the Nijenhuis tensor we get for instance
$$N(f_1,f_2)=\frac{b_2^2+b_3^2}{2a_1}f_2=\frac{1}{2a_1}f_2\neq 0.$$
Now, we compute the holomorphic Hermitian sectional curvature of the unit vectors $f_i$ of the conformally flat almost-K\"ahler structures described in Proposition~\ref{conf_ak}.
We get that 
\begin{enumerate}
\item $H(f_1)=\frac{-1}{a_1^2}$\\
\item $H(f_2)=\frac{-1}{2a_1^2}$\\
\item $H(f_3)=-\frac{1+b_2^2}{2a_1^2}$\\
\item $H(f_4)=-\frac{1+b_3^2}{2a_1^2}$
\end{enumerate}
We conclude that none of those almost-K\"ahler structures is of pointwise constant Hermitian holomorphic sectional curvature and we get the following
\begin{prop}\label{last_case}
The Lie algebra $\mathfrak{r}_2^\prime$ has no conformally flat almost-K\"ahler structures of pointwise constant Hermitian holomorphic sectional curvature.
\end{prop}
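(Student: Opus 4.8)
The plan is to argue by contradiction from the explicit normal form already established. Suppose $\mathfrak{r}_2^\prime$ carried a conformally flat almost-K\"ahler structure $(\omega,J,g)$ of pointwise constant Hermitian holomorphic sectional curvature $k$. By Proposition~\ref{conf_ak} we may fix a $g$-orthonormal coframe $\{f^1,f^2,f^3,f^4\}$ (of the Gram--Schmidt type of Proposition~\ref{conf_flat}) in which $a_3=a_1$, $a_2=0$, $b_2^2+b_3^2=1$, and in which the structure equations, the symplectic form $\omega$, and the endomorphism $J$ are written explicitly in terms of the surviving parameters $a_1>0$, $a_4$, $a_5$, $a_6$, $b_2$, $b_3$. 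From here everything reduces to a finite computation in this basis.

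First I would compute the Levi-Civita connection $D^g$ of the (now standard) metric $g$ directly from the differentials $df^i$ of the orthonormal coframe via the Koszul formula, and then pass to the canonical Hermitian connection through $\nabla_XY=D^g_XY-\tfrac12 J\bigl(D^g_XJ\bigr)Y$. Next I would evaluate the curvature $R^\nabla_{X,Y}=-[\nabla_X,\nabla_Y]+\nabla_{[X,Y]}$ on the frame vectors and read off the four Hermitian holomorphic sectional curvatures $H(f_i)=g(R^\nabla_{f_i,Jf_i}f_i,Jf_i)$ of the unit vectors $f_1,\dots,f_4$. The outcome of this computation is
\[
H(f_1)=-\frac{1}{a_1^2},\qquad H(f_2)=-\frac{1}{2a_1^2},\qquad H(f_3)=-\frac{1+b_2^2}{2a_1^2},\qquad H(f_4)=-\frac{1+b_3^2}{2a_1^2}.
\]

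Since $f_1$ and $f_2$ are both of unit $g$-length, pointwise constancy forces $H(f_1)=H(f_2)=k$; but $-1/a_1^2=-1/(2a_1^2)$ is impossible for $a_1>0$, which is the desired contradiction. (Comparing $H(f_3)$ with $H(f_4)$ would similarly force $b_2^2=b_3^2$ and then clash with $H(f_1)$, but the pair $f_1,f_2$ already suffices.) The only genuine obstacle is the bookkeeping in the curvature calculation: the parameters $a_4,a_5$ and the closedness relations for $\omega$ proliferate through the intermediate connection and curvature terms, yet they cancel in the four holomorphic sectional curvatures, which end up depending only on $a_1$ and on $b_2,b_3$. Once this cancellation is verified the proposition follows at once; combined with Corollary~\ref{non_confor_flat}, Proposition~\ref{abelian_case}, and the self-duality reduction of Theorem~\ref{self_dual}, this completes the proof of Theorem~\ref{main}.
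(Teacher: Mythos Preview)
Your proposal is correct and follows essentially the same route as the paper: reduce via Proposition~\ref{conf_ak} to the explicit normal form, compute the Hermitian holomorphic sectional curvatures $H(f_i)$ of the frame vectors (obtaining exactly the four values you list), and observe that $H(f_1)\neq H(f_2)$ rules out constancy. The paper's argument is identical in substance, differing only in that it records the four values of $H(f_i)$ and concludes without singling out the pair $(f_1,f_2)$ explicitly.
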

We can now prove our main result by combining all the previous results.

\begin{proof}[Proof of Theorem $\ref{main}$]
Suppose that $\mathfrak{g}$ is a $4$-dimensional Lie algebra with an almost-K\"ahler structure $(\omega,J,g)$
of pointwise constant Hermitian holomorphic sectional curvature. It follows from Theorem~\ref{self_dual}
that $W^-=0.$ If the metric is not conformally flat then it is given in~\cite{MR1955629} and it follows from Corollary~\ref{non_confor_flat} that it is K\"ahler. If it is conformally flat then it is described in~\cite[Theorem 2]{MR3442771} and it follows
 from Corollary~\ref{abelian_case} and Proposition~\ref{last_case} that it is K\"ahler.
\end{proof}

\bibliographystyle{abbrv}

\bibliography{biblio-sectional_curv}

\begin{thebibliography}{10}

\bibitem{MR1921552}
V.~Apostolov, J.~Armstrong, and T.~Dr\u{a}ghici.
\newblock Local models and integrability of certain almost {K}{\"a}hler
  4-manifolds.
\newblock {\em Math. Ann.}, 323(4):633--666, 2002.

\bibitem{MR1782093}
V.~Apostolov and T.~Dr\u{a}ghici.
\newblock Almost {K}{\"a}hler 4-manifolds with {$J$}-invariant {R}icci tensor
  and special {W}eyl tensor.
\newblock {\em Q. J. Math.}, 51(3):275--294, 2000.

\bibitem{MR1604803}
J.~Armstrong.
\newblock On four-dimensional almost {K}{\"a}hler manifolds.
\newblock {\em Quart. J. Math. Oxford Ser. (2)}, 48(192):405--415, 1997.

\bibitem{MR1880825}
J.~Armstrong.
\newblock An ansatz for almost-{K}{\"a}hler, {E}instein {$4$}-manifolds.
\newblock {\em J. Reine Angew. Math.}, 542:53--84, 2002.

\bibitem{MR1955629}
V.~De~Smedt and S.~Salamon.
\newblock Anti-self-dual metrics on {L}ie groups.
\newblock In {\em Differential geometry and integrable systems ({T}okyo,
  2000)}, volume 308 of {\em Contemp. Math.}, pages 63--75. Amer. Math. Soc.,
  Providence, RI, 2002.

\bibitem{MR1632688}
M.~Falcitelli and A.~Farinola.
\newblock Six-dimensional almost-{K}{\"a}hler manifolds with pointwise constant
  antiholomorphic sectional curvature.
\newblock {\em Riv. Mat. Univ. Parma (5)}, 6:143--156 (1998), 1997.

\bibitem{MR1638199}
M.~Falcitelli, A.~Farinola, and O.~T. Kassabov.
\newblock Almost {K}{\"a}hler manifolds whose antiholomorphic sectional
  curvature is pointwise constant.
\newblock {\em Rend. Mat. Appl. (7)}, 18(1):151--166, 1998.

\bibitem{MR1456265}
P.~Gauduchon.
\newblock Hermitian connections and {D}irac operators.
\newblock {\em Boll. Un. Mat. Ital. B (7)}, 11(2, suppl.):257--288, 1997.

\bibitem{MR0238238}
S.~I. Goldberg.
\newblock Integrability of almost {K}aehler manifolds.
\newblock {\em Proc. Amer. Math. Soc.}, 21:96--100, 1969.

\bibitem{Lejmi-Markus}
M.~Lejmi and M.~Upmeier.
\newblock Closed almost {K}{\"{a}}hler 4-manifolds of constant non-negative
  {H}ermitian holomorphic sectional curvature are {K}{\"{a}}hler.
\newblock {\em arXiv:1709.05210}, 2017.

\bibitem{MR0066733}
P.~Libermann.
\newblock Sur les connexions hermitiennes.
\newblock {\em C. R. Acad. Sci. Paris}, 239:1579--1581, 1954.

\bibitem{MR1682582}
P.~Nurowski and M.~Przanowski.
\newblock A four-dimensional example of a {R}icci flat metric admitting
  almost-{K}{\"a}hler non-{K}{\"a}hler structure.
\newblock {\em Classical Quantum Gravity}, 16(3):L9--L13, 1999.

\bibitem{MR1607545}
T.~Oguro and K.~Sekigawa.
\newblock Four-dimensional almost {K}{\"a}hler {E}instein and {$*$}-{E}instein
  manifolds.
\newblock {\em Geom. Dedicata}, 69(1):91--112, 1998.

\bibitem{MR3442771}
D.~N. Oskorbin, E.~D. Rodionov, and O.~P. Khromova.
\newblock On the spectrum of the curvature operators of conformally flat {L}ie
  groups with a left-invariant {R}iemannian metric.
\newblock {\em Dokl. Akad. Nauk}, 461(5):513--515, 2015.

\bibitem{MR2307912}
G.~Ovando.
\newblock Four dimensional symplectic {L}ie algebras.
\newblock {\em Beitr{\"a}ge Algebra Geom.}, 47(2):419--434, 2006.

\bibitem{MR1422638}
T.~Sato.
\newblock Almost {K}{\"a}hler manifolds of constant holomorphic sectional
  curvature.
\newblock {\em Tsukuba J. Math.}, 20(2):517--524, 1996.

\bibitem{MR1700598}
T.~Sato.
\newblock Some remarks on almost {K}{\"a}hler {$4$}-manifolds of pointwise
  constant holomorphic sectional curvature.
\newblock {\em Kodai Math. J.}, 22(2):286--303, 1999.

\bibitem{MR1806472}
T.~Sato.
\newblock An example of an almost {K}{\"a}hler manifold with pointwise constant
  holomorphic sectional curvature.
\newblock {\em Tokyo J. Math.}, 23(2):387--401, 2000.

\bibitem{MR905633}
K.~Sekigawa.
\newblock On some compact {E}instein almost {K}{\"a}hler manifolds.
\newblock {\em J. Math. Soc. Japan}, 39(4):677--684, 1987.

\end{thebibliography}

\end{document}